\documentclass[11pt]{article}

\usepackage{amsmath, amssymb, mathrsfs, amsthm, amsfonts}
\usepackage[margin=3cm]{geometry}
\usepackage[utf8]{inputenc}

\newtheorem{thm}{Theorem}[section]   
\newtheorem{defn}[thm]{Definition}
\newtheorem{lem}[thm]{Lemma}
\newtheorem{ex}{Example}

\def\NN{{\mathbb N}}

\def\B{{\mathcal B}}
\def\S{{\mathcal S}}
\def\M{{\mathcal M}}

\title{On Hyperoctahedral Enumeration System, Application to Signed Permutations}
\author{Iharantsoa Vero RAHARINIRINA\footnote{University of Antananarivo, Madagascar and University of Caen Normandy, France,
		\texttt{ihvero@yahoo.fr}}}

\begin{document}

\maketitle

\begin{abstract}
In this paper, we start by giving the definitions and basic facts about hyperoctahedral number system.  There is a natural correspondence between the integers expressed in the latter and the elements of the hyperoctahedral group when we use the inversion statistic on this group to code the signed permutations.  We use this correspondence to define a way with which the signed permutations group can be ordered. With this classification scheme, we can find the $ r $-th signed permutation from a given number $ r $ and vice versa without consulting the list  in lexicographical order of the elements of the signed permutations group. 
\paragraph{Keywords :}Hyperoctahedral Enumeration System, Signed Permutation, Inversion Statistic
\end{abstract}
\section{Introduction}
Let us denote by:
\begin{itemize}
\item $ \NN $ the set of non negative integers including $ 0 $,
\item $ [n] $ the set $ \{1,\cdots ,n\} $,
\item $ [\pm n] $ the set $ \{-n,\cdots,-1,1,\cdots ,n\} $,
\item $ \mathcal{S}_n$ the symmetric group of degree $ n $
\item $ \{e_1,e_2,\ldots,e_n\} $ the set of the standard basis vectors of the vector space $ \mathbb{R}^n $.
\end{itemize}
We represent an element $ \sigma $ of $ \mathcal{S}_n$ as the word
$ \sigma_1 \cdots \sigma_n $ where $ \sigma_i =\sigma(i) $.
\begin{defn}
	A bijection $ \pi :[\pm n]\longrightarrow [\pm n] $ satisfying $ \pi(-i)=-\pi(i) $ for all $ i $ in $ [\pm n] $ is called "signed permutation".
\end{defn}
We also write a signed permutation $ \pi $ in the form
\begin{equation*}
	\pi = \left( \begin{array}{cccc} 1 & 2 & \dots & n\\ 
\varepsilon_1 \sigma_1 & \varepsilon_2 \sigma_2 & \dots & \varepsilon_n\sigma_n \end{array} \right) \; 
\text{ with } \sigma\in \mathcal{S}_n \text{ and } \varepsilon_i \in \{\pm 1\}\; .
\end{equation*}
Under the ordinary composition of mappings, all signed permutations of the elements of $ [n] $ form a group $ \B_n $ called hyperoctahedral group of rank $ n $.

Laisant seems to be the first to classify permutations in lexicographic order. He associates bijectively a rank to each element of the symmetric group $ \S _n $. The key to this association is based on the existence of the inversion statistic on $ \S _n $.
\begin{defn}
	Let $ \sigma \in \S _n $.
A pair of indices $ (i,j) $ with $ i<j $ and $ \sigma _i >\sigma _j $ is called an inversion of $ \sigma $.
\end{defn}
In this work, we study an analogue of this classification for signed permutations. One method of studying the proposed ordering is using the new statistic number of $ i $-inversions on $ \B _n $. To define this last, it is convenient to see $ \B _n $ as the Coxeter group with root system
$$\Phi_n = \{\pm e_i,\,\pm e_i \pm e_{j} \ |\ 1 \leq i \neq j \leq n\},$$ 
and positive root system $$\Phi_n^+ = \{e_{k},\, e_i + e_{j},\, e_i - e_{j}\ |\ k \in [n],\, 1 \leq i < j \leq n\}.$$
Let us consider the following subset of $\Phi_n^+$ defined by
\begin{equation} \label{eq1}
\Phi_{n,i}^+ = \{ e_i,\, e_i + e_{j},\, e_i - e_{j}\ |\ i < j \leq n\} .
\end{equation}
\begin{defn}
	We define the number of $i$-inversions of $\pi \in \B _n$ by 
\begin{equation} \label{eq2}
\mathtt{inv}_i\,\pi = \# \{v \in \Phi_{n,i}^+ \ |\ \pi^{-1}(v) \in -\Phi_n^+\}.
\end{equation} 
\end{defn}\begin{ex}
	In table \ref{B2}, we see the corresponding $1$-inversions and $2$-inversions of the eight elements of $\B_2$:
\end{ex}
\begin{table}[h]
	\begin{center}
		\begin{tabular}{ c | c }
			Signed permutation $ \pi $& $\mathtt{inv}_1\pi\ \mathtt{inv}_2\pi $\\ \hline
			& \\
			$ \left( \begin{array}{cc} \mathbf{1} & \mathbf{2} \\ \mathbf{1} & \mathbf{2} \end{array} \right)$ &
			$  0 \ 0$ \\
			$ \left( \begin{array}{cc} \mathbf{1} & \mathbf{2} \\ \mathbf{1} & -\mathbf{2} \end{array} \right)$ &
			$ 0 \ 1$ \\
			$ \left( \begin{array}{cc} \mathbf{1} & \mathbf{2} \\ \mathbf{2} & \mathbf{1} \end{array} \right)$ &
			$ 1 \ 0$ \\
			$\left( \begin{array}{cc} \mathbf{1} & \mathbf{2} \\ \mathbf{2} & -\mathbf{1} \end{array} \right)$ &
			$ 1 \ 1$ \\
			$ \left( \begin{array}{cc} \mathbf{1} & \mathbf{2} \\ -\mathbf{2} & \mathbf{1} \end{array} \right)$ &
			$ 2 \ 0$ \\
			$ \left( \begin{array}{cc} \mathbf{1} & \mathbf{2} \\ -\mathbf{2} & -\mathbf{1} \end{array} \right)$ &
			$ 2 \ 1$ \\
			$ \left( \begin{array}{cc} \mathbf{1} & \mathbf{2} \\ -\mathbf{1} & \mathbf{2} \end{array} \right)$ &
			$ 3 \ 0$ \\
			$ \left( \begin{array}{cc} \mathbf{1} & \mathbf{2} \\ -\mathbf{1} & -\mathbf{2} \end{array} \right)$ &
			$ 3 \ 1$
		\end{tabular}
	\end{center}
	\caption{The $1$-inversions and $2$-inversions of the elements of $\B_2$.} \label{B2table}
\label{B2}	
\end{table}
The easiest way to explain lexicographic ordering is with an example.
\begin{ex}
	The set of all permutations of order three in lexicographic order is :
	$$
	abc,acb,bac,bca,cab,cba .
	$$
\end{ex}
The main purpose of this work is to give a classification of signed permutations of elements of $ [n] $ when the elements of the set $ [\pm n] $ are ordered as follow : $ 1,2,\cdots ,n,-n,\cdots,-1 $. For instance, we give in table \ref{table3} the classification of elements of $ \B_3 $.
 This article is organised as follow : in section \ref{sectionHyperS} we define the hyperoctahedral enumeration system and give some properties of the numbers in this system, then in section  \ref{sectionClassification} we write how to classify the elements of an hyperoctahedral group, it is one application of the hyperoctahedral system.
\begin{table}[h]
	\begin{center}
		\begin{tabular}{ c | c | c || c | c | c }
			Rank & $ \pi=\pi_1\pi_2\pi_3$  & $\mathtt{inv}_1\pi:\mathtt{inv}_2\pi: \mathtt{inv}_3\pi $& Rank & $ \pi=\pi_1\pi_2\pi_3$  & $\mathtt{inv}_1\pi:\mathtt{inv}_2\pi: \mathtt{inv}_3\pi $\\ \hline
1& 1 2 3 & 0:0:0  & 	25 & -3 1 2& 3:0:0  \\ \hline
2 & 1 2-3& 0:0:1 & 	26 & -3 1-2& 3:0:1 \\ \hline
3 & 1 3 2& 0:1:0   &	27 & -3 2 1& 3:1:0 \\ \hline
4 & 1 3-2& 0:1:1   &	28 & -3 2-1& 3:1:1  \\ \hline
5 & 1-3 2& 0:2:0   &	29 & -3-2 1& 3:2:0  \\ \hline
6 & 1-3-2& 0:2:1   &	30 & -3-2-1& 3:2:1  \\ \hline
7 & 1-2 3& 0:3:0 &	31 & -3-1 2& 3:3:0     \\ \hline
8 & 1-2-3& 0:3:1  &	 32 & -3-1-2& 3:3:1  \\ \hline
9 & 2 1 3& 1:0:0 &  33 & -2 1 3& 4:0:0   \\ \hline
10 & 2 1-3& 1:0:1  & 34 & -2 1-3& 4:0:1    \\ \hline
11 & 2 3 1& 1:1:0  & 35 & -2 3 1& 4:1:0    \\ \hline
12 & 2 3-1& 1:1:1  & 36 & -2 3-1& 4:1:1    \\ \hline
13 & 2-3 1& 1:2:0 & 37 & -2-3 1& 4:2:0    \\ \hline
14 & 2-3-1& 1:2:1  &	38 & -2-3-1& 4:2:1  \\ \hline
15 & 2-1 3& 1:3:0  & 39 & -2-1 3& 4:3:0   \\ \hline
16 & 2-1-3& 1:3:1 & 40 & -2-1-3& 4:3:1   \\ \hline
17 & 3 1 2& 2:0:0& 41 & -1 2 3& 5:0:0  \\ \hline
18 & 3 1-2& 2:0:1 & 42 & -1 2-3& 5:0:1 \\ \hline
19 & 3 2 1& 2:1:0 & 43 & -1 3 2& 5:1:0  \\ \hline
20 & 3 2-1& 2:1:1 & 44 & -1 3-2& 5:1:1  \\ \hline
21 & 3-2 1& 2:2:0 & 45 & -1-3 2& 5:2:0  \\ \hline
22 & 3-2-1& 2:2:1 &  46 & -1-3-2& 5:2:1  \\ \hline
23 & 3-1 2& 2:3:0  & 47 & -1-2 3& 5:3:0  \\ \hline
24 & 3-1-2& 2:3:1 & 48 & -1-2-3& 5:3:1  \\ \hline
		\end{tabular}
	\end{center}		
	\caption{Elements of $ \B_3 $ with the indication of their rank and their corresponding number in the hyperoctahedral system.}
	\label{table3}
\end{table}
\section{Hyperoctahedral System}\label{sectionHyperS}
Let us consider the sequence of integers $\B=(B_n)_{n\in\NN}$ with $ B_i=2^ii! $. This means that
$$ \B=(1,2,8,48,\cdots)\; . $$
Let us now take an integer $ N>0 $, it is obvious that
\[ B_{n}\leqslant N<B_{n+1}\;  \text{ for some }n \in \NN\; . \]\label{Npositif}
By dividing $ N $ by $ B_n $, one obtains
	\begin{equation}\label{r1}
	N=d_{n} B_{n}+R_1\;  \text{ with }\begin{cases}
	\; 0\leqslant R_1<B_n\\ \;  1\leqslant d_n\leqslant 2n+1
	\end{cases} .
	\end{equation}
Here is why $  1\leqslant d_n\leqslant 2n+1 $.
\begin{proof}
	We have on the one hand $ d_n \geqslant 1 $ because $ N\geqslant B_n $, on the other hand assume that  
\begin{eqnarray*}
 d_n\geqslant 2(n+1),\text{it follows that} \\  \;  N\geqslant d_n B_n \geqslant 2(n+1) 2^n n!=B_{n+1}\;  
\end{eqnarray*}
 which is in contradiction with $ N<B_{n+1} $,
 so we have $ d_n <2(n+1) $.
\end{proof}
Concerning the remainder $ R_1 $, one distinguishes two cases : $ R_1 \neq 0 $ and $ R_1 = 0 $. 
For  $ R_1 \neq 0 $, we substitute $ N $ by it then, we  divide it by $ B_{n-1} $.
In order to allow us to repeat the same operation on the next remainders, let us assume that each time we divide $ R_i $ by $ B_{n-i} $ we obtain $ R_{i+1} \neq 0 $ as remainder. This means that first of all, we have
\begin{equation}\label{r2}
R_1=d_{n-1} B_{n-1}+R_2\;  \text{ with }\begin{cases}
\; 0< R_2<B_{n-1}\\ \;  0\leqslant d_{n-1}< 2n
\end{cases} .
\end{equation}
We have seen that $ d_n \neq 0 $ but here $ d_{n-1} $ may be zero.
According to our hypothesis $ R_1 \neq0 $, we just know from equation (\ref{r1}) that $ 0<R_1<B_n $. That's why, one or other of the following cases appears :
\begin{itemize}
\item	 $ R_1 <B_{n-1} $ which gives $ d_{n-1}=0 $
\item $ R_1\geqslant B_{n-1} $, which gives $ d_{n-1}\geqslant 1 $.
\end{itemize}
Thus we obtain the first inequality $  0\leqslant d_{n-1} $. As $ R_1 < B_n $, we have $ d_{n-1}<2n $. Therefore, $ 0\leqslant d_{n-1}< 2n $.
From equations (\ref{r1}) and (\ref{r2}), we write
$$ N=d_n B_n +d_{n-1} B_{n-1}+R_2\; . $$
By continuing in this way, we have for $ i=2,3 $ and so on
\begin{equation*}
	R_i=d_{n-i} B_{n-i}+R_{i+1}\;  \text{ with }\begin{cases}
		\; 0< R_{i+1}<B_{n-i}\\ \;  0\leqslant d_{n-i}\leqslant 2(n-i)+1
	\end{cases} .
\end{equation*}
At last, the integer $ N $ may be written in the form
\begin{equation}\label{r3}
N=d_n B_n +\cdots +d_1 B_1+d_0 B_0\; \text{ where }\begin{cases}
\;  d_i \in \{0,1,2,\cdots,2i+1 \}\\ \;  B_i =2^i i!
\end{cases} . 
\end{equation}By convention, we denote this integer $ N $ by the representation
$$ d_n :d_{n-1}:d_{n-2}:\; \cdots \; :d_2 :d_1 :d_0\;  \text{ where the }d_i\text{'s are digits.} $$
Let us now deal with $ R_1 =0 $. Throughout the successive divisions, if one of the obtained remainders is zero, then from this remainder, all the digits $ d_i $'s will be zero. Let $ R_k =0 $ be this remainder 
, that is $ R_{k-1}=d_{n-k+1} B_{n-k+1}+0 $ and the notation of the integer $ N $ will be
$$ d_n :d_{n-1}: \; \cdots \; :d_{n-k+1}:\underbrace{\;  0:\;  \cdots \; : 0 \;  }_{n-k+1 \text{ times}}\; . $$
For instance, if $ N=d_n B_n +0 $, i.e $ R_1 =0 $, we write $ N=d_n :\underbrace{\;  0:\;  \cdots \; : 0 \;  }_{n \text{ times}}$.

Like this, we have just written an integer $ N>0 $ in a special enumeration system.
\begin{defn}\label{classical}
Hyperoctahedral number system is a system that expresses all natural number $ n$ of $ \NN $ in the form: 
\begin{equation}
n=\sum_{i=0}^{k(n)} n_i. B_i \; , \text{ where }k(n)\in\NN,\;  n_i \in \{0,1,2,\cdots,2i+1 \}\; \text{ and }\; B_i =2^i i!\;   .
\end{equation}
\end{defn}
This definition is motivated by the fact that we have taken as basis $\B=(B_0,B_1,B_2,B_3,\cdots)$ where $ B_i $ is the cardinal of the hyperoctahedral group $ \B _i $. 
\begin{defn}
Let	$ n= d_{k-1}:d_{k-2}: \cdots :d_1 :d_0 $ be a number in hyperoctahedral system. We say that $ n $ is a $ k $-digits number if the first digit $ d_{k-1} $ is not zero.
\end{defn}
From equation (\ref{r3}) with the hypothesis $  B_{n}\leqslant N<B_{n+1} $ on page \pageref{Npositif}, we see that any number between $ B_k $ and $ B_{k+1} $ is a $ (k+1) $-digits number to the base $\B=(B_i)_{i\in\NN}$.
\subsection*{Converting }
Let us convert an integer from decimal system to hyperoctahedral system by means of Horner's procedure. Actually it is a scheme based on expressing a polynomial by a particular expression. For instance:
$$
a + bx  +c x^2 d x^3 + e x^4 \;=\; a + x \cdot\left(b + x \cdot(c + x \cdot(d + x \cdot e))\right). 
$$

To express a positive integer $n$ in the hyperoctahedral system, one proceeds with the following manner. Start by dividing $n$ by $2$ and let $d_0$  be the rest $r_0 $  of the expression
$$n=r_0+(2)q_0 \; .$$
Divide $q_0$ by $4$, and let $d_1$  be the rest $r_1 $ of the expression
$$q_0=r_1+(4)q_1 \; .$$ 
Continue the procedure by dividing $q_{i-1}$ by $2(i+1) $ and taking $d_i:=r_i$ of the expression
$$q_{i-1}=r_i+2(i +1)q_i \; $$
until $ q_l =0 $ for some $ l\in \NN $. In this way, we obtain $ n= d_l:d_{l-1}: \cdots :d_1 :d_0 $ and we also have
\begin{equation*}
n= d_0 + 2 \left(d_1 + 4 \cdot(d_2 + 2(3) \cdot(d_3 + \cdots))\right) .
\end{equation*}
Now let	$ n= d_{k-1}:d_{k-2}: \cdots :d_1 :d_0 $ be a number in hyperoctahedral system. By definition \ref{classical}, one way to convert $ n $ to the usual decimal system is to calculate
$$ d_{k-1} 2^{k-1}(k-1)! +\cdots +d_1 .2+d_0 \; . $$
In practice, one can use this algorithm :
\begin{center}
	\begin{tabular}{c}
		\hline\\
		\begin{tabular}{lc}
			\verb|Input :|
			& \verb|An integer| $ d_{k-1}:d_{k-2}: \cdots : d_1 :d_0 $ \verb|in hyperoctahedral system.|\\
			\verb|Output :| & \verb|An integer| $ d $ \verb|in decimal system.|\\
			\hline\\
		\end{tabular}\\
		\begin{tabular}{ll}
			1. \textbf{initiate the value of $ d $ }: &  $ d  \leftarrow d_{k-1} $\\
2. \textbf{for} $ i $ from $ k-1 $ to 1 do : &  $ d \leftarrow  d.2.i+d_{i-1} $\\
			3. \textbf{return} $ d $& 
		\end{tabular}\\
		\hline
	\end{tabular}
\end{center}\vspace{3mm}
\begin{ex}
To convert the number $ 7:0:2:3:1 $ to the decimal system, multiply 1, 3, 2, 0, 7 respectively by $ B_0 $, $ B_1 $, $ B_2 $, $ B_3 $, $ B_4 $, after that, add the results :
$$
7.384+0+2.8+3.2+1=2711\; .
$$
We obtain the same result with :
\begin{eqnarray*}
d_4=& 7 \; ,& \\
7.2.4+d_3 =& 56+0 =& 56 \; ,\\
56.2.3+d_2=& 336+2=& 338 \; ,\\
338.2.2+d_1=& 1352+3=& 1355\; ,\\
1355.2.1+d_0=& 2710+1=& 2711 \; .
\end{eqnarray*}
Let us now convert $ 2711 $ to the hyperoctahedral system by dividing it by $ 2 $, the obtained quotient by $ 4 $, and so on until we have zero as quotient. The digits that we search are the successive remainders. We shall find $ 7:0:2:3:1 $.
$$
\begin{array}[t]{rrrrrl}
2711~ \vline& \hspace{-5mm}\underline{~~~~2~~~}~& & & & \\
11\; \; \; \vline & 1355~ \vline& \hspace{-5mm}\underline{\; \; ~4~~~}~& & & \\
11~ \vline& 15\; \; \; \vline & 338~ \vline & \hspace{-5mm}\underline{\; \; \; 6~~}~& & \\
1~ \vline& 35~ \vline & 38~ \vline & 56~ \vline & \hspace{-5mm}\underline{~~8~}~& \\
& 3~ \vline& 2~ \vline& 0~ \vline & 7~ \vline & \hspace{-3mm}\underline{~10~} \\
 & & & & 7~ \vline & 0
\end{array}
$$
\end{ex}
The first ninety numbers written in hyperoctahedral system are given in table \ref{table1}.
\begin{table}
\begin{center}
\begin{tabular}{|| c | c || c | c || c | c ||}
			\hline
			Decimal & Hyperoctahedral  & Decimal & Hyperoctahedral  & Decimal &  Hyperoctahedral \\ \hline
			0 & 0  & 30 & 330 & 60 & 1120    \\ \hline
			1 & 1  & 31 & 331 & 61 & 1121 \\ \hline
			2 & 10 & 32 & 400 & 62 & 1130  \\ \hline
			3 & 11 & 33 & 401 & 63 & 1131   \\ \hline
			4 & 20 & 34 & 410 & 64 & 1200   \\ \hline
			5 & 21 & 35 & 411 & 65 & 1201   \\ \hline
			6 & 30 & 36 & 420 & 66 & 1210   \\ \hline
			7 & 31 & 37 & 421 & 67 & 1211  \\ \hline
			8 & 100 & 38 & 430 & 68 & 1220  \\ \hline
			9 & 101 & 39 & 431 & 69 & 1221  \\ \hline
			10 & 110 & 40 & 500 & 70 & 1230 \\ \hline
			11 & 111 & 41 & 501 & 71 & 1231 \\ \hline
			12 & 120 & 42 & 510 & 72 & 1300 \\ \hline
			13 & 121 & 43 & 511 & 73 & 1301 \\ \hline
			14 & 130 & 44 & 520 & 74 & 1310 \\ \hline
			15 & 131 & 45  & 521 & 75 & 1311 \\ \hline
			16 & 200 & 46 & 530 & 76 &  1320 \\ \hline
			17 & 201 & 47 & 531 & 77 & 1321 \\ \hline
			18 & 210 & 48 & 1000 & 78 & 1330\\ \hline
			19 & 211 & 49 & 1001 & 79 & 1331 \\ \hline
			20 & 220 & 50 & 1010 & 80 & 1400 \\ \hline
			21 & 221 & 51 & 1011 & 81 & 1401 \\ \hline
			22 & 230 & 52 & 1020 & 82 & 1410 \\ \hline
			23 & 231 & 53 & 1021 & 83 & 1411 \\ \hline
			24 & 300 & 54 & 1030 & 84 & 1420 \\ \hline
			25 & 301 & 55 & 1031 & 85 & 1421 \\ \hline
			26 & 310 & 56 & 1100 & 86 & 1430 \\ \hline
			27 & 311 & 57 & 1101 & 87 & 1431 \\ \hline
			28 & 320 & 58 & 1110 & 88 & 1500 \\ \hline
			29 & 321 & 59 & 1111 & 89 & 1501 \\ \hline
		\end{tabular}
\end{center}		
\caption{Representing positive integers in the decimal system and in the hyperoctahedral system.}
\label{table1}
\end{table}
\section{Signed permutations and their classification}\label{sectionClassification}
Let $ n\in\NN $ and $ \pi\in\B_n $.
We code the signed permutation $ \pi $ by $ \mathtt{inv}_1 \pi:\cdots : \mathtt{inv}_n\pi $.
\begin{lem}\label{lem1}
	let $ i,j \in [n] $ and $ \pi \in\B_n $. If
\begin{enumerate}
	\item[(i)] $ \pi(i)=j $, then
$$
\mathtt{inv}_i\,\pi = \# \{k \in \{i+1,\ldots , n\} \ |\ j>|\pi(k)|\},
$$	
\item[(ii)] $ \pi(i)=-j $, then
$$
\mathtt{inv}_i\,\pi = 1+\# \{k \in \{i+1,\ldots , n\} \ |\ j>|\pi(k)|\}+2.\# \{k \in \{i+1,\ldots , n\} \ |\ j<|\pi(k)|\}.
$$	
\end{enumerate}	
\end{lem}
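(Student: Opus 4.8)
The plan is to unwind definition (\ref{eq2}) directly, by decomposing $\Phi_{n,i}^+$ from (\ref{eq1}) as the disjoint union of the short root $\{e_i\}$ and the $n-i$ pairs $\{e_i+e_k,\, e_i-e_k\}$ with $i<k\le n$, and then summing the contribution of each piece to $\mathtt{inv}_i\pi$. First I would record the action of $\B_n$ on $\mathbb{R}^n$ in the form that is actually used here: for $m\in[n]$ one has $\pi^{-1}(e_m)=\varepsilon_m\, e_{|\pi(m)|}$, where $\varepsilon_m\in\{\pm1\}$ is the sign of $\pi(m)$ and $e_{-t}$ is read as $-e_t$; hence, by linearity, $\pi^{-1}(e_i\pm e_k)=\varepsilon_i e_j \pm \varepsilon_k e_{|\pi(k)|}$ whenever $|\pi(i)|=j$. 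I will also use throughout that $|\pi(k)|\neq j$ for every $k\neq i$ (since $\pi$ is a bijection of $[\pm n]$ with $|\pi(i)|=j$); this is what lets me rewrite a constant $1$ as $[\,j>|\pi(k)|\,]+[\,j<|\pi(k)|\,]$, where $[\,P\,]$ denotes $1$ if $P$ holds and $0$ otherwise.

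The short root contributes little: $\pi^{-1}(e_i)=\varepsilon_i e_j$ lies in $-\Phi_n^+$ exactly when $\varepsilon_i=-1$, i.e. when $\pi(i)<0$, so it contributes $0$ in case (i) and the leading $1$ in case (ii). For a pair $\{e_i+e_k,\, e_i-e_k\}$ with $i<k\le n$, the two images under $\pi^{-1}$ are $\varepsilon_i e_j+\varepsilon_k e_{|\pi(k)|}$ and $\varepsilon_i e_j-\varepsilon_k e_{|\pi(k)|}$, so I need to decide, for each of the four sign patterns, which of the vectors $\pm e_j\pm e_{|\pi(k)|}$ lies in $-\Phi_n^+$; this is read off from the explicit list $\Phi_n^+=\{e_p,\, e_p+e_q,\, e_p-e_q : p<q\}$, the one point of care being that $e_q-e_p$ with $p<q$ is a \emph{negative} root. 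In case (i) ($\varepsilon_i=+1$) the two images form the set $\{e_j+e_{|\pi(k)|},\, e_j-e_{|\pi(k)|}\}$: the first is always a positive root and the second is negative iff $j>|\pi(k)|$, so the pair contributes $[\,j>|\pi(k)|\,]$; summing over $k=i+1,\dots,n$ gives statement (i). In case (ii) ($\varepsilon_i=-1$) the two images form $\{-(e_j+e_{|\pi(k)|}),\, e_{|\pi(k)|}-e_j\}$: the first is always negative and the second is negative iff $j<|\pi(k)|$, so the pair contributes $1+[\,j<|\pi(k)|\,]=[\,j>|\pi(k)|\,]+2\,[\,j<|\pi(k)|\,]$; adding the $1$ from the short root and summing over $k$ gives statement (ii).

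The only genuinely delicate step is this translation of the condition $\pi^{-1}(v)\in-\Phi_n^+$ into an inequality between $j=|\pi(i)|$ and $|\pi(k)|$ — that is, the four-way sign bookkeeping for the pairs, together with keeping straight which of $e_p\pm e_q$ is positive; everything afterwards is just collecting contributions and using $|\pi(k)|\neq j$. In the write-up I would carry out case (i) in full detail and then observe that case (ii) runs along identical lines.
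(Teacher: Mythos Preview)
Your proposal is correct and is precisely the direct unwinding of definitions (\ref{eq1})--(\ref{eq2}) that the paper's one-line proof (``Use the definition of the number of $i$-inversions'') gestures at but does not spell out. The decomposition of $\Phi_{n,i}^+$ into the short root $e_i$ and the pairs $\{e_i\pm e_k\}$, together with the sign bookkeeping for $\pi^{-1}(e_i\pm e_k)$, is exactly the computation the paper leaves implicit, so there is no meaningful difference in approach.
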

\begin{proof}
	Use the definition of the number of $ i $-inversions (equations (\ref{eq1}) and (\ref{eq2})).
\end{proof}
\begin{lem}\label{lem3}
Let $ i \in [n] $ and $ \pi \in\B_n $. We have $ \mathtt{inv}_i\pi \in\{0,1,\ldots,2(n-i)+1\} $.
\end{lem}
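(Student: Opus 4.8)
The plan is to read off both bounds directly from Lemma \ref{lem1}, treating separately the cases $\pi(i)>0$ and $\pi(i)<0$. Write $\pi(k)=\varepsilon_k\sigma_k$ with $\sigma\in\S_n$ and $\varepsilon_k\in\{\pm1\}$ as in the introduction, so that $|\pi(k)|=\sigma_k$ for every $k$; in particular $\sigma_{i+1},\ldots,\sigma_n$ are $n-i$ pairwise distinct elements of $[n]$, none of them equal to $\sigma_i=|\pi(i)|$.

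\emph{Case $\pi(i)=j>0$.} By part (i) of Lemma \ref{lem1}, $\mathtt{inv}_i\pi=\#\{k\in\{i+1,\ldots,n\}\mid j>|\pi(k)|\}$ is the cardinality of a subset of $\{i+1,\ldots,n\}$, hence $0\le\mathtt{inv}_i\pi\le n-i\le 2(n-i)+1$.

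\emph{Case $\pi(i)=-j<0$.} Put $a=\#\{k\in\{i+1,\ldots,n\}\mid j>|\pi(k)|\}$ and $b=\#\{k\in\{i+1,\ldots,n\}\mid j<|\pi(k)|\}$. Since $|\pi(k)|\ne j$ for every $k>i$, the index sets counted by $a$ and by $b$ partition $\{i+1,\ldots,n\}$, whence $a+b=n-i$. Part (ii) of Lemma \ref{lem1} then gives
\[
\mathtt{inv}_i\pi=1+a+2b=1+(a+b)+b=1+(n-i)+b ,
\]
and from $0\le b\le n-i$ we conclude $1\le\mathtt{inv}_i\pi\le 2(n-i)+1$.

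Combining the two cases yields $\mathtt{inv}_i\pi\in\{0,1,\ldots,2(n-i)+1\}$, as claimed. The only step that requires any attention is the partition claim in the second case, and it is immediate once one observes that $k\mapsto|\pi(k)|$ is a permutation of $[n]$, so that $j$ cannot occur among $|\pi(i+1)|,\ldots,|\pi(n)|$; everything else is a direct bookkeeping of the formulas supplied by Lemma \ref{lem1}.
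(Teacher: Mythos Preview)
Your proof is correct and is exactly the approach the paper has in mind: the paper's own proof consists of the single sentence ``We deduce that from Lemma~\ref{lem1},'' and you have simply written out that deduction in full. Your case split and the partition argument $a+b=n-i$ in the negative case are the natural details behind that one-line reference.
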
		
\begin{proof}
We deduce that from Lemma \ref{lem1}.
\end{proof}
From lemma \ref{lem3} we see that : 
\begin{eqnarray*}
&&\mathtt{inv}_1\pi \in\{0,1,\ldots,2n-1\},\\
&&\mathtt{inv}_2\pi \in\{0,1,\ldots,2(n-2)+1\},\\
&&\vdots \\
&&\mathtt{inv}_{n-1}\pi \in\{0,1,2,3\},\\
&&\mathtt{inv}_n\pi \in\{0,1\}.
\end{eqnarray*}

In other words,  the code $ \; \mathtt{inv}_1 \pi:\cdots : \mathtt{inv}_n\pi\;  $ has the same property than a $ n $-digits number in hyperoctahedral system. When we arrange in order all the elements of the hyperoctahedral group $ \B_n $, then the rank of $ \pi $ is $ 1+p $ where $ \; \mathtt{inv}_1 \pi:\cdots : \mathtt{inv}_n\pi\;  $ represents the number $ p $ in hyperoctahedral system.
\begin{ex}
	Let us consider the signed permutation $ 	\pi = \left( \begin{array}{crcc} 1 & 2 & 3 & 4\\ 
	1 & -3  & 4 & 2 \end{array} \right) \; 
	 $.
From equations (\ref{eq1}) and (\ref{eq2}), 	 
$$ \mathtt{inv}_1 \pi:\mathtt{inv}_2 \pi :\mathtt{inv}_3 \pi :\mathtt{inv}_4\pi\; =\; 0:4:1:0 $$
which is the representation of $ 0.48+4.8+1.2+1.0=34 $ in hyperoctahedral system. The rank of $ \pi $ is $ 34+1=35 $ in $ \B_4 $.
\end{ex}
Given a signed permutation of the elements of $ [n],\; n>0 $, we have just seen a kind of classification with which we determine the rank of this permutation. Now, considering the rank $ k $ of a signed permutation, we want to generate the $ k $-th signed permutation of $ \B_n $. An efficient way to derive such signed permutation is to first convert $ k-1 $ in hyperoctahedral system and then use the result to compute the corresponding permutation. Actually, each number in hyperoctahedral system determines an unique signed permutation.

Let us denote $ k-1 $ by $ \; \gamma_{n-1}:\cdots:\gamma_0\;  $ in hyperoctahedral system. Recall that we search the corresponding signed permutation of rank $ k $. As the numbers of $ i $-inversions has exactly the same property than the digits in hyperoctahedral system, we are going to generate a permutation $ \pi $ such that 
$$ \mathtt{inv}_1 \pi:\cdots : \mathtt{inv}_n\pi =\gamma_{n-1}:\cdots:\gamma_0 \; . $$
We start by defining for $ \ell\in \NN $ and $ n>0 $ the following mapping : 
\begin{equation*}
\begin{array}[t]{rccl}
\M_\ell : & [2n-1]\cup\{0\} & \longrightarrow
& [n-1]\cup\{0\}\times\{-1,1\} \\
& \gamma& \longmapsto& \begin{cases}
(\gamma ,1)& \text{ if }\gamma\leq \ell\\
(1+2\ell-\gamma ,-1)&\text{ if }\gamma>\ell
\end{cases} \\
\end{array}.
\end{equation*}
We have already seen that 
\begin{equation*}
\left( \begin{array}{cccc} 1 & 2 & \dots & n\\ 
\varepsilon_1 \sigma_1 & \varepsilon_2 \sigma_2 & \dots & \varepsilon_n\sigma_n \end{array} \right) \; 
\text{ with } \sigma\in \mathcal{S}_n \text{ and } \varepsilon_i \in \{\pm 1\}
\end{equation*}
denote a signed permutation of elements of $ [n] $.
Thus finding $ \pi $ comes back to find  a permutation $  \sigma$ of $ \mathcal{S}_n \text{ and } \varepsilon_i \in \{\pm 1\} $. Considering $ \M_i(\gamma_i)=(m_i,\epsilon_i) $ for $ i\in[n-1]\cup\{0\} $, we obtain the two sequences :
$$
\epsilon =(\epsilon_{n-1},\ldots,\epsilon_0)
\text{ and }m=(m_{n-1},\ldots,m_0)\; .
$$
Taking $ \varepsilon_i =\epsilon_{n-i} $, we obtain the $ \varepsilon_i $'s. The next step to do is finding $ \sigma\in\S_n $. By the definition of the mapping $ \M_\ell $, we deduce that $ m_i\in\{0,1,2,3,\ldots,i\} $. Thereby
\begin{eqnarray*}
&& m_{n-1}\in\{0,1,2,3,\ldots,n-1\},\\
&&\vdots \\
&& m_1\in\{0,1\} , \\
&& m_0=0 \ .
\end{eqnarray*}
For an element of $ \S_n $, the number of inversions between an object and those after the latter only varies from zero to $ p $, where $ p $ indicates the number of the following objects. The $ m_i $'s have the same property than this number of inversions. Therefore, we are going to search $ \sigma\in\S_n $ which verifies 
$$
\mathsf{inv}_1 \sigma\;  \cdots\;  \mathsf{inv}_n \sigma =m_{n-1}\; \cdots\; m_0 \text{ where }\mathsf{inv}_i \sigma =\#\{i<j<n \ |\ \sigma(i)>\sigma(j)\}\; .
$$
Let $ r_i=1+m_{i-1} $.

$ \sigma_1 $ is the $ r_n $-th element of the list :  $ 1, 2, 3, \ldots ,n $ and then one deletes it from the list.
$ \sigma_2 $ is the $ r_{n-1} $-th element among the rest of the list and one also deletes it from this one. And so on $ \sigma_n $ is the unique element of the last list. This procedure can be found for instance in the work of Laisant\cite{Laisant}.
\begin{ex}
One asks the 35-th signed permutation of the elements of the set $ [4] $. In hyperoctahedral system, we represent $ 35-1=34 $ by $ 0:4:1:0 $ (we take four digits because the set $ [4] $ has four elements). We use the mappings $ \M_1, \ldots,\M_4 $ to obtain the two sequences :
$$
\epsilon=(1,-1,1,1) \text{ and }m=(0,1,1,0)\; .	
$$
Adding an unit to each element of $ m=(0,1,1,0) $ gives the ranks $ r_4=1,r_3=2,r_2=2,r_1=1 $. Thereby among the elements of the set $ [4] $ : $ 1,2,3,4 $ written in their order, one takes the one of rank $ r_4=1 $, that is $ 1 $, then the second among $ 2,3,4 $ which is $ 3 $, next one takes $ 4 $ or the one of rank $ r_2 $ among $ 2,4 $, at last the first of the list which is $ 2 $. Thus one has
$$ \sigma=1342\in\S_4\; . $$
From the sequence $ \epsilon $, one forms the thirty fifth signed permutation of the hyperoctahedral group $ \B_4 $ :
$$
\left( \begin{array}{cccc} 1 & 2 & 3 & 4\\ 
(1)\sigma_1 & (-1)\sigma_2  & (1)\sigma_3 & (1)\sigma_4 \end{array} \right) =\left( \begin{array}{crcc} 1 & 2 & 3 & 4\\ 
1 & -3  & 4 & 2 \end{array} \right) \; .
$$ 
\end{ex}

\end{document}